\newtheorem{theorem}{theorem}[section]
\newtheorem{lem}[theorem]{Lemma}
\newtheorem{thm}[theorem]{Theorem}
\newtheorem{prop}[theorem]{Proposition}
\theoremstyle{definition}
\theoremstyle{remark}
\numberwithin{equation}{section}
\begin{document}

\title[Cyclic Algebras of Odd Prime Degree]{Cyclic Division Algebras of Odd Prime Degree are never Amitsur-Small}


\author{Adam Chapman}
\address{School of Computer Science, Academic College of Tel-Aviv-Yaffo, Rabenu Yeruham St., P.O.B 8401 Yaffo, 6818211, Israel}
\email{adam1chapman@yahoo.com}
\thanks{}

\author{Ilan Levin}
\address{Department of Mathematics, Bar-Ilan University, Ramat Gan, 55200}
\email{ilan7362@gmail.com}

\author{Marco Zaninelli}
\address{Department of Mathematics, University of Pennsylvania, 
Philadelphia, 
USA}
\email{zaninelli.marco21@gmail.com}
\keywords{
Division Rings; Cyclic Algebras; Maximal Ideals; Left Ideals}
\subjclass[2020]{Primary: 16K20; Secondary:
16D25
}

\date{}

\begin{abstract}
A division ring $D$ is Amitsur-Small if for every $n$ and every maximal left ideal $I$ in $D[x_1,\dots,x_n]$, $I \cap D[x_1,\dots,x_{n-1}]$ is maximal in $D[x_1,\dots,x_{n-1}]$. The goal of this note is to prove that cyclic division algebras of odd prime degree over their center are never Amitsur-Small.
\end{abstract}

\maketitle

\section{Introduction}

It is a classical result that for a field $F$ and a maximal ideal $I$ in $F[x_1,\dots,x_n]$, the ideal $I \cap F[x_1,\dots,x_{n-1}]$ is maximal in $F[x_1,\dots,x_{n-1}]$. In \cite{AmitsurSmall:1978}, Amitsur and Small asked the analogous question for division rings, i.e., if $D$ is a division ring and $I$ is a maximal left ideal in $D[x_1,\dots,x_n]$, is $I \cap D[x_1,\dots,x_{n-1}]$ necessarily a maximal left ideal in $D[x_1,\dots,x_{n-1}]$.
In \cite{ChapmanParan:2025}, the authors defined  a division ring $D$ to be Amitsur-Small if for any $n \geq 2$ and any maximal left ideal $I$ in $D[x_1,\dots,x_n]$, the answer to this question is positive.
In the same paper, the authors showed that $\mathbb{H}$ is Amitsur-Small, but any quaternion division algebra $D$ over a non-Pythagorean field is not Amitsur-Small. The authors also showed that degree $3$ division algebras are never Amitsur-Small. 

The goal of this note is to complete the picture for cyclic division algebras of odd prime degree, by showing that they are never Amitsur-Small.

It is important to note that in this context (like in \cite{Rowen:1992}), the polynomial ring $D[x_1,\dots,x_n]$ is defined to be $D \otimes_F F[x_1,\dots,x_n]$, and thus, the variables $x_1,\dots,x_n$ are central in this ring. In the language of \cite{GM}, this is denoted by $D_L[x_1,\dots,x_n]$, not to be confused with $D_G[x_1,\dots,x_n]$, which is a much larger ring.

\section{Cyclic Algebras}
A cyclic algebra $D=(K/F,\beta)$ of degree $n$ over a field $F$ is generated by two elements $i$ and $j$ such that $K=F[i]$, $K/F$ is cyclic of degree $n$ with Galois group $\langle \sigma \rangle$, where $j t j^{-1}=\sigma(t)$ for any $t\in F[i]$, and $j^n=\beta\in F^\times$.
Division rings of prime degree over their center are conjectured to always be cyclic, but in this note, we simply assume that the algebras under discussion are cyclic.
For a good reference on cyclic algebras, see \cite[Section 2.5]{GilleSzamuely:2017}.

\begin{lem}\label{Maximal subfields}
Let $D$ be a cyclic algebra of odd prime degree $p$ over $F$. If $F[i]$ is isomorphic to $F[j]$ as $F$-algebras, then $D$ is not a division algebra.
Consequently, if $D$ is a division algebra, then $F[i] \not \cong F[j]$.
\end{lem}

\begin{proof}
There is a famous result (see \cite[Page 98, Theorem 19]{Albert:1968}) that $D$ is a division algebra if and only if $\beta$ is not a norm in the field extension $F[i]/F$. If $F[j] \cong F[i]$ as $F$-algebras, then $F[i]$ contains the image of $j$ under this isomorphism, and the norm of this element is exactly $\beta$, and thus $D$ is not a division algebra.
\end{proof}

Let $p$ be a prime number. We recall that a $p$-special field is a field $L$ for which every finite field extension $M/L$ has $\deg(M/L)=p^m$ for some nonnegative integer $m$. For example, $\mathbb{R}$ is $2$-special.
By \cite{Lotscher:2013}, a necessary and sufficient condition for being $p$-special is not having nontrivial extensions of prime-to-$p$ degree.
Therefore, a $p$-special closure $L$ of $F$ is obtained by iteratively extending scalars to all prime-to-$p$ extensions.
Consequently, a division $F$-algebra of degree $p$ remains a division algebra under scalar extension to its $p$-special closure $L$.

\begin{lem}\label{F[j] factorization}
Let $D$ be a cyclic algebra of odd prime degree $p$ over $F$. Let $f$ be  the minimal polynomial of $i$. Then $f$ has no proper monic right-hand factor $g$ whose coefficients are in $F[j]$, and thus, no such factor that commutes with $j$.
\end{lem}

\begin{proof}
As shown above, there exists a $p$-special field $L$ containing $F$ obtained by taking the closure under all prime-to-$p$ extensions.
The resulting algebra $D\otimes_F L$ is still a division algebra, and its maximal subfields $L[i]$ and $L[j]$, are thus not isomorphic.
Now, assume the contrary, that $f$ has such a proper right-hand factor $g$ with coefficients in $F[j]$. 
Its coefficients lie in $L[j]$, which is also $p$-special, and thus $g$ has a root in $L[j]$.
This root is conjugate to $i$ by Wedderburn's Theorem (see \cite{Rowen:1992}), and so $L[j]$ contains an isomorphic copy of $L[i]$. 
Since both $L[i]/L$ and $L[j]/L$ are of degree $p$, we have that $L[j] \cong L[i]$, which contradicts Lemma \ref{Maximal subfields}.
\end{proof}

\section{Amitsur-Small}

In this section we show explicitly why cyclic division algebras of odd prime degree are not Amitsur-Small.
We first recall a useful result from \cite{ChapmanParan:2025}:

\begin{prop}[{\cite[Proposition 2.3]{ChapmanParan:2025}}]\label{useful}
Let $D$ be a division algebra, let $f\in D[x]$ be a monic reducible polynomial, and let $a\in D$ be an element such that $af=fa$ and every proper monic right-hand factor of $f$ does not commute with $a$. Then the left ideal $M$ generated by $f,y-a$ in $D[x,y]$ is a maximal left ideal, whose intersection with $D[x]$ is not a maximal left ideal in $D[x]$.
\end{prop}

\begin{thm}
If $D$ is a cyclic division algebra of odd prime degree $p$ over a field $F$, then $D$ is not Amitsur-Small.
\end{thm}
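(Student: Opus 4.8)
The plan is to establish non-Amitsur-Smallness already at $n=2$ by producing a single maximal left ideal $I$ in $D[x_1,x_2]$ whose contraction $I\cap D[x_1]$ is not maximal. I will argue module-theoretically. Maximal left ideals of a ring $R$ correspond to cyclic simple left modules $R/I$; and for any $m$ generating a simple module $M=R/I$ one has $I=\ann_R(m)$ and $I\cap D[x_1]=\ann_{D[x_1]}(m)$, so that $I\cap D[x_1]$ is maximal if and only if the cyclic $D[x_1]$-module $D[x_1]\,m$ is simple. Hence it suffices to construct a simple $D[x_1,x_2]$-module $M$ together with a generator $m$ for which $D[x_1]\,m$ is \emph{not} simple.

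To build $M$, let $f$ be the minimal polynomial of $i$ and let $\theta$ be a root of $f$, so $K:=F(\theta)\cong F[i]$ is a maximal subfield of $D$. By \Lref{F[j] factorization}, $f$ is irreducible over the field $F[j]$, so $L:=F[j][\theta]=F[j][t]/(f(t))$ is a field; it is a proper extension of $K$ with $[L:K]=p\geq 2$ (indeed $[L:F]=p^2$). Since $K$ is a maximal subfield of the degree-$p$ division algebra $D$, it splits $D$, whence $D\otimes_F L\cong M_p(L)$. The $F$-algebra map $F[x_1,x_2]\to L$ sending $x_1\mapsto\theta$ and $x_2\mapsto j$ is surjective, so it induces a surjection $D[x_1,x_2]=D\otimes_F F[x_1,x_2]\twoheadrightarrow D\otimes_F L\cong M_p(L)$, under which $x_1$ and $x_2$ act as multiplication by the central scalars $\theta,j\in L$, while $D$ acts through the splitting representation $\rho\colon D\to M_p(L)$. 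As $L^p$ is the unique simple left $M_p(L)$-module and the map is onto, $M:=L^p$ is a simple $D[x_1,x_2]$-module.

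It remains to restrict to $D[x_1]$ and choose $m$. Because $x_1$ acts as the scalar $\theta$, the action of $D[x_1]$ on $M$ factors through the image of $D\otimes_F F[\theta]=D\otimes_F K\cong M_p(K)$; identifying this image with $M_p(K)$ acting on $L^p\cong L\otimes_K K^p$, I choose $m$ whose $p$ coordinates form a $K$-basis of $L$, so that $D[x_1]\,m=M_p(K)\,m=L^p=M$. A dimension count then pins down the annihilator: $f(x_1)$ is central and $f(x_1)m=f(\theta)m=0$, so $D[x_1]\,f(x_1)\subseteq\ann_{D[x_1]}(m)$; since $\dim_F D[x_1]/D[x_1]f(x_1)=\dim_F(D\otimes_F K)=p^3=\dim_F M$, equality $\ann_{D[x_1]}(m)=D[x_1]\,f(x_1)$ follows. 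Taking $I:=\ann_{D[x_1,x_2]}(m)$, which is maximal because $M$ is simple, I get $I\cap D[x_1]=D[x_1]\,f(x_1)$, and this is not maximal because $D[x_1]/D[x_1]f(x_1)\cong M_p(K)$ is not a division ring (it has proper nonzero left ideals), hence is not simple over itself. Therefore $D$ is not Amitsur-Small.

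The crux, and the step I expect to be the main obstacle, is arranging the splitting field $L$ to be a \emph{proper} extension of the maximal subfield $K$: this is exactly what forces the restricted module $D[x_1]\,m$ to split into $[L:K]\geq 2$ copies of the simple $M_p(K)$-module and thereby fail to be simple. This is precisely what \Lref{F[j] factorization} secures, through $F[i]\not\cong F[j]$ (\Rref{Maximal subfields}) and descent to a $p$-special closure, since it is the irreducibility of $f$ over $F[j]$ that makes $L=F[j][\theta]$ a field of degree $p$ over $K$. Were $K$ already maximal among finite extensions (as for $\mathbb{H}$, where $K\cong\mathbb{C}$ is algebraically closed), the restriction $D[x_1]\,m$ would necessarily be simple and the contraction would remain maximal; the odd-prime hypothesis is what excludes this degenerate possibility. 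A secondary point needing care is the exact identification of the contraction, i.e.\ the dimension count pinning $\ann_{D[x_1]}(m)$ down to $D[x_1]\,f(x_1)$.
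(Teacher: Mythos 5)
Your argument is correct, and it reaches the conclusion by a genuinely different route from the paper, even though both hinge on the same key input (\Lref{F[j] factorization}, i.e.\ the irreducibility of $f$ over $F[j]$). The paper works directly with the explicit left ideal $I=\langle f,\,y-j\rangle$: the contraction $\langle f\rangle\subsetneq\langle x-i\rangle$ is visibly non-maximal, and maximality of $I$ is proved by hand via the division algorithm in $D[x]$ together with the commutator trick $[g,y-j]$, whose $\ell$-th coefficient $[a_\ell,j]$ is nonzero precisely because the lemma forbids a proper monic divisor of $f$ with all coefficients in $F[j]$. You instead package everything module-theoretically: the lemma makes $L=F[j][\theta]$ a field with $[L:K]=p$, the surjection $D[x_1,x_2]\twoheadrightarrow D\otimes_F L\cong M_p(L)$ hands you the simple module $L^p$ for free (so maximality of $I=\ann(m)$ costs nothing), and non-maximality of the contraction falls out of the fact that $L^p$ restricted to $M_p(K)$ splits into $[L:K]=p$ copies of $K^p$, with your choice of $m$ and the dimension count $p^3=p^3$ pinning the contraction down to $D[x_1]f(x_1)$ (I checked the two delicate points: a vector whose coordinates are a $K$-basis of $L$ does satisfy $M_p(K)\,m=L^p$, and the codimension comparison does force $\ann_{D[x_1]}(m)=D[x_1]f(x_1)$). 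What the paper's approach buys is elementarity and self-containment --- no representation theory, just polynomial division; what yours buys is a conceptual explanation of the phenomenon (the contraction fails to be maximal exactly because the splitting field $K$ can be properly enlarged inside a commutative field generated together with $j$, which also illuminates why $\mathbb{H}$ escapes), at the price of some routine but necessary bookkeeping about generators and annihilators.
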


\begin{proof}
It is enough to consider $D[x,y]$ and find a left ideal $I$ which is maximal there and such that $I \cap D[x]$ is not maximal.
As in Section 2, let $i$ and $j$ be a standard pair of generators of $D$, and take $I = \langle f,y-j \rangle$ where $f$ is the minimal polynomial of $i$.
By Lemma \ref{F[j] factorization}, $j$ is an element that commutes with $f$ but does not commute with any monic proper right-hand factor of $f$.
Therefore, by Proposition \ref{useful}, $I$ satisfies all the requirements, and the statement follows.
\end{proof}

\section*{Acknowledgements}
The authors thank the referee for the careful reading and useful feedback.
\bibliographystyle{amsplain}
\def\cprime{$'$}

\end{document}